\newtheorem {theorem}{Theorem}[section]
\newtheorem {lemma} [theorem] {Lemma}
\newtheorem {proposition} [theorem] {Proposition}
\newtheorem {corollary} [theorem] {Corollary}
\newtheorem {notation} [theorem] {Notation}
\theoremstyle{definition}
\newtheorem{definition}[theorem]{Definition}
\newtheorem{remark}[theorem]{Remark}
\newtheorem{example}[theorem]{Example}
\begin{document}
\title[]{Conditions on the monodromy for a surface group extension to be CAT(0)}
\author{Kejia Zhu}
\address{Department of Mathematics, Statistics and Computer Science, University of Illinois at Chicago}
\email{kzhumath@gmail.com}

\date{}
\maketitle

\begin{abstract}
In order to determine when surface-by-surface bundles are non-positively curved, Llosa Isenrich and Py in \cite{Py} give a necessary condition: given a surface-by-surface group $G$ with infinite monodromy, if $G$ is CAT(0) then the monodromy representation is injective. We extend this to a more general result: Let $G$ be a group with a normal surface subgroup $R$. Assume $G/R$ satisfies the property that for every infinite normal subgroup $\Lambda$ of $G/R$, there is an infinite finitely generated subgroup $\Lambda_0<\Lambda$ so that the centralizer $C_{G/R}(\Lambda_0)$ is finite. We then prove that if $G$ is CAT(0) with infinite monodromy, then the monodromy representation has a finite kernel. This applies in particular if $G/R$ is acylindrically hyperbolic. 



\end{abstract}

\section{Introduction:}
Surface bundles are a classical topic, as is the question of understanding which spaces are non-positively curved. A metric space is called \emph{non-positively curved} if every point has a neighborhood which is CAT(0), in the sense that triangles are no fatter than those in Euclidean space  (see \cite[Definition II.1.2]{Bridson}). This is a generalization of manifolds with non-positive sectional curvature. For a more detailed introduction to CAT(0) spaces, see \cite[Part II]{Bridson}.\\

Now it is natural to consider the non-positively curved condition for the case of surface bundles with fibers of genus greater than $1$, as they are ``extensions" of non-positively curved spaces. Thurston proved that a fibration over $S^1$ with closed higher genus fiber is hyperbolic if and only if the monodromy is pseudo-Anosov (see e.g. \cite[Chapter 6]{otal2001hyperbolization}). Leeb in \cite[Section 3]{Leeb1995} proves many surface bundles over the circle with fibers of genus greater than $1$ are non-positively curved, but Kapovich and Leeb in \cite[Theorem 3.7]{Kapovich1996ActionsOD} show that given a closed hyperbolic hyperbolic surface $S$, if the mapping class defining the monodromy, say $f$, is a collection of Dehn twists (all in the same direction) then the mapping torus of $f$ does not admit a metric of nonpositive curvature. For the case of (hyperbolic) surface bundles over a surface, some of the classical examples are non-positively curved: this is the case of all double étale Kodaira fibrations in the sense of \cite{catanese2009double}  and of the Kodaira-Atiyah examples (see \cite{atiyah2015signature} and \cite{kodaira2015certain}). See \cite[Theorem 3]{Py} for an explanation.  Before Example \ref{ex}, we first introduce Llosa Isenrich and Py's theorem as well as some necessary definitions and facts.

\begin{theorem}\cite[Theorem 3]{Py}\label{2}
	Let $G$ be a surface-by-surface group with infinite monodromy. Fix a normal subgroup $R\unlhd G$ isomorphic to a surface group with $G/R$ isomorphic to a surface group. If the group $G$ is $\mathrm{CAT(0)}$ then the monodromy $G/R\to\mathrm{Out}(R)$ is injective.
\end{theorem}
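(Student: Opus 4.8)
The plan is to argue by contradiction: assume the monodromy $\rho\colon Q\to\Out(R)$, where $Q:=G/R$ and $\pi\colon G\to Q$ is the quotient map, has $\ker\rho\ne1$, and derive a contradiction with the existence of a geometric action of $G$ on a $\mathrm{CAT}(0)$ space. First, the algebra. Since $Q$ is a torsion-free surface group and $\rho(Q)\cong Q/\ker\rho$ is infinite, $\ker\rho$ is an infinite normal subgroup of infinite index, hence infinitely generated free; pick $\bar z\in\ker\rho$ of infinite order. Because $R$ is centerless, a lift $z_0\in G$ of $\bar z$ acts on $R$ by an inner automorphism, say conjugation by $r_0\in R$, so $z:=r_0^{-1}z_0$ centralizes $R$ and still projects to $\bar z$; thus $z$ has infinite order and $\langle R,z\rangle=R\times\langle z\rangle\cong R\times\Z\leq G$, with $\langle z\rangle$ central in it. Equivalently, $C_G(R)\cap R=Z(R)=1$ and $\pi$ maps $C_G(R)$ isomorphically onto $\ker\rho$, so $C_G(R)$ is a nontrivial --- in fact infinitely generated free --- normal subgroup of $G$.

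Now fix a geometric action of $G$ on a $\mathrm{CAT}(0)$ space $X$; then $X$ is complete and proper, and every element of $G$ acts as a semisimple isometry (a standard consequence of properness and cocompactness, cf.\ \cite{Bridson}), no infinite-order element being elliptic. Hence $z$ is hyperbolic, so $\mathrm{Min}(z)$ is a nonempty closed convex subset which splits isometrically as $C'\times\R$ with $z$ acting trivially on $C'$ and by a nontrivial translation on the $\R$-factor. Since $R$ commutes with $z$, it preserves $\mathrm{Min}(z)$ and this splitting. Moreover $C_G(R)$ is normal in $G$, so for every $g\in G$ the conjugate $gzg^{-1}$ also centralizes $R$, and therefore $R$ preserves $g\,\mathrm{Min}(z)=\mathrm{Min}(gzg^{-1})$ for \emph{every} $g\in G$.

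The heart of the proof is to contradict the infiniteness of $\rho(Q)$, equivalently the fact that $R$ has infinite index in $G$. The route I would take: first use that all the translates $g\,\mathrm{Min}(z)$ are $R$-invariant, together with cocompactness of the $G$-action, to show that a minimal nonempty $R$-invariant closed convex subset $Y_0$ of $\mathrm{Min}(z)$ has $G$-translates that are coarsely dense in $X$, whence $R$ acts cocompactly, so geometrically, on $Y_0$. Since $R$ is a surface group, $Y_0$ is then $\delta$-hyperbolic with Gromov boundary $\Lambda(R):=\partial Y_0\cong S^1$, and $R$ acts on this circle as a faithful cocompact Fuchsian (convergence) group. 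As $R\unlhd G$, the set $\Lambda(R)\subseteq\partial X$ is $G$-invariant, giving $G\to\mathrm{Homeo}(\Lambda(R))$; because $R$ acts faithfully on $\Lambda(R)$ one checks that the kernel of this map is exactly $C_G(R)$, so $\bar G:=G/C_G(R)$ acts faithfully on $S^1$ with normal subgroup $\mathrm{Inn}(R)\cong R$ and $\bar G/R\cong\rho(Q)$. Finally I would combine convergence-group rigidity with the extra constraints coming from the embedding $\Lambda(R)\hookrightarrow\partial X$ and from the action of $C_G(R)$ --- in particular the hyperbolic element $z$ --- on the ambient space $X$, to show that a ``pseudo-Anosov-type'' extension of $R$ by an infinite group cannot arise here, forcing $\rho(Q)=\bar G/R$ to be finite --- the desired contradiction. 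Hence $\ker\rho=1$.

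The main obstacle is the whole of the third step, which contains two genuinely hard sub-problems. First, promoting the \emph{normal} subgroup $R$ to a \emph{convex-cocompact} one: a normal subgroup of a $\mathrm{CAT}(0)$ group need not be convex-cocompact in general, so this step must genuinely use that $R$ is a centerless $\mathrm{PD}_2$ surface group and, crucially, that $Q$ is a \emph{hyperbolic} surface group --- equivalently that $C_G(R)$ is not merely infinite but an infinitely generated free normal subgroup of $G$, so $R$ centralizes free groups of arbitrarily large rank. Second, ruling out a ``pseudo-Anosov-type normalizer'' of the Fuchsian circle action: this cannot be obstructed using the circle $\Lambda(R)$ alone (in $\mathrm{Homeo}(S^1)$ such normalizers exist) and must be obstructed by the global $\mathrm{CAT}(0)$ geometry of $X$ together with the element $z$. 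That hyperbolicity of $Q$ is essential is confirmed by the failure of the statement for $Q\cong\Z^2$: the group $G=(\pi_1(\Sigma)\rtimes_\phi\Z)\times\Z$ with $\phi$ pseudo-Anosov is $\mathrm{CAT}(0)$ (it acts geometrically on $\mathbb H^3\times\R$), has normal surface subgroup $R=\pi_1(\Sigma)$, infinite monodromy, and $\ker\rho\cong\Z\ne1$. By contrast the first two paragraphs, the semisimplicity input, and the convergence-group rigidity statement are standard.
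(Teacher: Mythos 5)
Your first two paragraphs are correct and match the standard setup: $C_G(R)$ maps isomorphically onto $\ker\rho$, is normal in $G$, and, being a nontrivial normal subgroup of infinite index in the torsion-free surface group $Q$, is infinite; the semisimplicity input and the splitting of $\mathrm{Min}(z)$ are standard. But the third step, which you yourself call the heart of the proof, is only a plan, and both of the hard sub-problems you isolate are left unresolved, so this is not a proof. Worse, the first sub-problem --- that $R$ acts cocompactly on a minimal nonempty closed convex $R$-invariant subset $Y_0$ --- is not something you should expect to establish: in the Llosa Isenrich--Py argument the convex set on which $R$ naturally acts is a factor $M_1$ of an isometric splitting $M\cong M_1\times M_2$, and the group acting geometrically on $M_1$ is $G/C_G(R)$, in which $R$ has \emph{infinite} index (the quotient is $\rho(Q)$, infinite by hypothesis); so $R$ is not cocompact there, and the convergence-group/circle-boundary route never gets off the ground.

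The paper's route (Section \ref{s3}, specialized to $Q$ a hyperbolic surface group, which has Property (LIP) since it is acylindrically hyperbolic) avoids both of your obstacles. Write $\Lambda=C_G(R)$ and split into two cases according to whether $R\times\Lambda$ fixes a point of the visual boundary of the $\mathrm{CAT}(0)$ space $E$ on which $G$ acts geometrically. If it does not, a minimal invariant convex subset $M$ splits as $M_1\times M_2$ with the $G$-action a product action and with $G/\Lambda$, $G/R$ acting geometrically on the two factors; the orbit map then makes $\psi\colon G\to G/\Lambda\times G/R$ an injective quasi-isometry, so its image has finite index, but modulo $R$ the image is the graph of $\rho$ inside $\rho(Q)\times Q$, which can have finite index only if $\rho(Q)$ is finite --- contradicting infinite monodromy. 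If $R\times\Lambda$ does fix a boundary point, so does the finitely generated subgroup $R\times\Lambda_0$ for a suitable infinite finitely generated $\Lambda_0<\Lambda$, whence its centralizer in $G$ is infinite by Proposition \ref{33}; but that centralizer equals $C_\Lambda(\Lambda_0)$, which is finite by Property (LIP). Neither convex cocompactness of $R$ nor any analysis of circle actions is needed. To salvage your approach you would have to supply complete arguments for both sub-problems, and at minimum reformulate the first so that it is not contradicted by the product-splitting picture above.
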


\begin{remark}
	The fundamental group of an oriented closed surface of genus larger than $1$ is called a \emph{surface group}. A group is called CAT(0) if it admits a properly discontinuous and cocompact action on a CAT(0) space. Let $X$ be a compact non-positively curved space.  It follows from the definition that $\pi_1(X)$ is a CAT(0) group.  However, there are CAT(0) groups that do not arise in this way (for example, some CAT(0) groups have torsion, whereas if $X$ is compact non-positively curved then the Cartan-Hadamard Theorem ( \cite[Theorem II.4.1]{Bridson}) says that $X$ is a $K(\pi,1)$, so $\pi_1(X)$ is torsion-free).
\end{remark}

Given a closed (hyperbolic) surface bundle $F\to E\to B$ of (connected) manifolds, there is a short exact sequence $$1\to\pi_1(F)\to\pi_1(E)\to\pi_1(B)\to 1.$$
Recall the isomorphism classes of the (oriented) $F$-bundles over $B$ are in bijection with the conjugacy classes of homomorphisms $\pi_1(B) \to \text{Mod}(F)$, where $\text{Mod}(F)$ is the mapping class group of $F$ (see \cite[p.155]{farb2011primer}). 
So the group $\pi_1(E)$ is determined by the monodromy representation.


\begin{example}\label{ex}
	It is easy to build a surface bundle over surface $F \to E \to B$, so that the monodromy representation $\psi:\pi_1(B) \to \text{Mod}(F)$ is not injective but has infinite image. Such a $\psi$ is easy to construct. For example, consider the composition $\varphi=\varphi_2\circ\varphi_1$, where $\varphi_1$ is a surjection from $\pi_1(B)$ onto $\mathbb Z$ and $\varphi_2$ is a map from $\mathbb Z\to\text{Mod}(F)$, sending a generator of $\mathbb Z$ to an infinite order element of $\text{Mod}(F)$.  Theorem \ref{2} shows that the $F$-bundle over $B$ corresponding to  $\psi$ is an example of a surface bundle over a surface which is not non-positively curved. 
\end{example}
Motivated by Theorem \ref{2}, in this paper, we consider the following question:  Suppose that $G$ is a group.  When does $G$ have the property that for any surface $F$ of genus at least $2$, and any group extension $E$ defined by a homomorphism $\rho : G \to Mod(F)$ with infinite image, if $E$ is CAT(0) then $\ker(\rho)$ is finite?

\begin{definition}
	A group $Q$ has Property (LIP) if for every infinite normal subgroup $\Lambda \trianglelefteq Q$, there is an infinite finitely generated subgroup $\Lambda_0<\Lambda$ so that the centralizer $C_Q(\Lambda_0)$ is finite.
\end{definition}

The following theorem generalizes Theorem \ref{2}:

\begin{theorem}\label{ge}
	Suppose $R$ is a surface group, that $$1\to R \to G \to\Gamma\to 1$$ is a short exact sequence with infinite monodromy, and that $\Gamma$ has Property (LIP). If $G$ is $\mathrm{CAT}(0)$ then the monodromy representation $\Gamma\to\mathrm{Out}(R)$ has finite kernel.
\end{theorem}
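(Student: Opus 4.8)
The plan is to argue by contradiction: assume $G$ is CAT(0), the monodromy $\mu\colon\Gamma\to\Out(R)$ has infinite image, and yet $K:=\ker\mu$ is infinite. We may assume $R$ is centerless (if $R\cong\Z^2$ then, by the Flat Torus Theorem, a normal $\Z^2$ in a CAT(0) group forces the monodromy into a finite group, so the hypotheses are vacuous). Write $\pi\colon G\to\Gamma$ for the quotient map. Since $R$ is centerless, conjugation gives $G\to\Aut(R)$ with kernel $C_G(R)$, inducing $\mu$; an elementary diagram chase then shows $C_G(R)\cap R=1$, that $\pi$ restricts to an isomorphism $C_G(R)\xrightarrow{\ \sim\ }K$, and that the preimage $\pi^{-1}(K)$ is the internal direct product $R\times C_G(R)$, a normal subgroup of $G$ with $G/(R\times C_G(R))\cong\operatorname{Im}\mu$ infinite. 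Setting $N:=C_G(R)$, the assumptions become: $N\trianglelefteq G$ is infinite, while $R\times N$ has infinite index in $G$.

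Next I reduce to a finitely generated piece using Property (LIP). Applied to the infinite normal subgroup $K\trianglelefteq\Gamma$, it yields an infinite finitely generated $K_0<K$ with $C_\Gamma(K_0)$ finite; pulling back along $N\xrightarrow{\ \sim\ }K$ gives a finitely generated infinite $N_0\le N\le G$ with $\pi(N_0)=K_0$. As $N_0$ centralizes $R$, we have $R\le C_G(N_0)=:H$; and since $H$ maps to $\Gamma$ with image inside the finite group $C_\Gamma(K_0)$ and with kernel $H\cap R=R$, we get $R\trianglelefteq H$ with $[H:R]<\infty$. Thus $H$ is virtually a closed hyperbolic surface group — word-hyperbolic, one-ended, with no $\Z^2$ and $\operatorname{vcd}=2$ — and $N_0\le C_G(H)$. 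The net effect is a finitely generated copy of $R\times N_0$ inside the infinite-index normal subgroup $R\times N\le G$, with centralizers under control.

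The crux is the CAT(0) geometry: I must show that if $R\trianglelefteq G$ is a centerless surface group and $G$ is CAT(0), then $N=C_G(R)$ being infinite forces $R\times N$ to have \emph{finite} index in $G$, contradicting that $R\times N$ has infinite index in $G$. Here $G$ acts properly cocompactly, hence by semisimple isometries, on a CAT(0) space $X$; after passing to the canonical minimal $G$-invariant convex subspace and its de Rham decomposition $\mathbb E^{\,n}\times X_1\times\cdots\times X_p$ into irreducibles, a finite-index subgroup of $G$ preserves the splitting. The normal, non-amenable, directly and freely indecomposable group $R$ should then project trivially to all but one non-Euclidean factor $X_j$ and act cocompactly there, while $N$ projects into the isometry group of the complementary subspace; the concrete input is the Flat Torus Theorem (a copy of $R$ together with any infinite-order element of $N_0$ jointly preserve and split the $\mathrm{Min}$-set of that element) and the fact that a hyperbolic surface group admits no nontrivial normal cyclic subgroup, so these splittings are ``in the $N$-direction''. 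Once $R$ accounts cocompactly for $X_j$, a covolume (discreteness-and-cocompactness) comparison of the relevant projections forces $R\times N$ to exhaust $G$ up to finite index.

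I expect the main obstacle to be precisely this last geometric step — upgrading ``$C_G(R)$ is infinite'' to ``$R$ is a virtual direct factor of $G$, with complement virtually $N$''. The delicate points are: (i) $N$ need not be finitely generated, which is why Property (LIP) is invoked to replace it by $N_0$ and to force $C_G(N_0)$ to be virtually $R$; (ii) excluding the degenerate scenarios in which $R$ interacts with a Euclidean factor, or spreads across several irreducible factors without acting cocompactly on any one of them — this is exactly where the hypotheses that $R$ is a \emph{hyperbolic} surface group (not merely one-ended) and that the monodromy is \emph{infinite} both come in, as in Llosa Isenrich and Py's Theorem~\ref{2}; and (iii) bookkeeping with the finite-index subgroup that preserves the canonical splitting so as to conclude a finite-index statement for $G$ itself.
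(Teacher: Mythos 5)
Your algebraic setup is correct and matches the paper's: $\Lambda:=C_G(R)$ maps isomorphically onto $\ker\phi$, the subgroup $R\times\Lambda$ is normal of infinite index in $G$ (since the monodromy image is infinite), and Property (LIP) supplies the finitely generated $\Lambda_0<\Lambda$ with finite centralizer. But the geometric heart of the argument --- which you yourself flag as ``the main obstacle'' --- is left unproved, and the route you sketch for it does not work as stated. You propose to extract a product structure from the de Rham decomposition of the canonical minimal $G$-invariant convex subspace. That decomposition can be trivial (the minimal $G$-space may be irreducible with no Euclidean factor), in which case there are no factors for $R$ to ``account for cocompactly'' and nothing for $N$ to project into; and even when it is nontrivial, there is no reason a normal surface subgroup should act cocompactly on a single irreducible factor. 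The product structure actually used in Llosa Isenrich--Py (and in this paper) comes from the \emph{subgroup} $R\times\Lambda$, via Monod's splitting theorem applied to a closed convex subset minimal for $R\times\Lambda$; and that theorem only applies when $R\times\Lambda$ fixes no point of the visual boundary. This forces a case division that your proposal omits entirely.

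The missing case --- $R\times\Lambda$ fixes a point at infinity --- is exactly where the finitely generated group $R\times\Lambda_0$ enters: a finitely generated subgroup of a cocompact $\mathrm{CAT}(0)$ group fixing a boundary point has infinite centralizer (Proposition 33 of \cite{Py}), whereas $C_G(R\times\Lambda_0)=C_\Lambda(\Lambda_0)$ is finite by (LIP); contradiction. In the complementary case, after Monod's splitting $M\cong M_1\times M_2$ one must show the $G$-action is a product action and that $G/\Lambda$ and $G/R\cong Q$ act properly discontinuously and cocompactly on the respective factors; the proper discontinuity of the $Q$-action on $M_2$ is the second, independent place where (LIP) is used (to show $Q$ is discrete in $\mathrm{Isom}(M_2)$). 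The final contradiction is then not a covolume comparison but the observation that $g\mapsto(g\Lambda,gR)$ is an injective quasi-isometry $G\to G/\Lambda\times G/R$, hence has finite-index image, so the graph of $\phi$ has finite index in $\phi(Q)\times Q$, which forces $\phi(Q)$ to be finite, contradicting infinite monodromy. In short: your reduction and your identification of where (LIP) must intervene are right, but the two genuinely geometric steps (the boundary fixed-point dichotomy and the splitting/product-action analysis) are either absent or replaced by an argument that does not go through.
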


\begin{remark}
	The proof of Theorem \ref{2} by Llosa-Isenrich and Py generalizes easily to prove Theorem \ref{ge} (see Section \ref{s3}). Indeed the proof in \cite{Py} relied solely and implicitly on the fact that surface groups have property (LIP), a fact proved in loc.cit..
\end{remark}

Theorem \ref{ge} raises the question of which groups have Property (LIP).

\begin{example}
	The first examples we give of groups with Property (LIP) are higher rank lattices: Let $G$ be a connected semi-simple Lie group with finite centre with $\text{rk}_{\mathbb R}(G)\ge 2$. Then every irreducible lattice of $G$ satisfies Property (LIP). This follows quickly from the Normal Subgroup Theorem of Margulis \cite{margulis1991discrete}, because every infinite normal subgroup is actually of finite index and hence has finite center, therefore finite centralizer. Some concrete examples are:\\
	\noindent (1) $\text{SL}(n, \mathbb Z)\subset \text{SL}(n, \mathbb R)$ for $n\ge 3$;\\ 
	\noindent (2) $\text{SL}(2, \mathbb Z[\sqrt 2])\subset \text{SL}(2, \mathbb R)\times \text{SL}(2, \mathbb R)$.
\end{example}

Unfortunately, Theorem \ref{ge} applied to higher rank lattices does not give any new information about surface bundles because Farb and Masur in \cite[Theorem 1.1]{farb1998superrigidity} showed that any homomorphism from an irreducible lattice in a semisimple Lie group of higher rank to the mapping class group has finite image.\\

Our main example of groups with Property (LIP) is that of acylindrically hyperbolic groups (see Definition \ref{ac}) as shown by the next result.

\begin{theorem}\label{main}
	Acylindrically hyperbolic groups satisfy Property (LIP).
\end{theorem}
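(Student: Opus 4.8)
The plan is to exploit the structure theory of acylindrical actions on hyperbolic spaces. Recall (Definition~\ref{ac}) that a group $Q$ is acylindrically hyperbolic exactly when it admits a non-elementary acylindrical action on a hyperbolic geodesic space $S$, and that ``non-elementary'' may be taken to mean $Q$ contains two \emph{independent} loxodromic elements, i.e.\ loxodromics $a,b$ with disjoint fixed-point pairs $\{a^{+},a^{-}\},\{b^{+},b^{-}\}\subseteq\partial S$. Fix such an action of $Q$ on $S$ and let $\Lambda\trianglelefteq Q$ be an infinite normal subgroup. The idea is to find two independent loxodromic elements $g,h$ inside $\Lambda$, set $\Lambda_{0}:=\langle g,h\rangle$, and then observe that the centralizer in $Q$ of a pair of independent loxodromics is trapped inside the intersection of their elementary closures, which is finite.

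\emph{Step 1 (the main point): the restriction of the action to $\Lambda$ is non-elementary.} First, $\Lambda$ cannot have bounded orbits. Indeed, fix a loxodromic $t\in Q$ and a basepoint $x\in S$; by normality $\Lambda\cdot(t^{n}x)=t^{n}(\Lambda\cdot x)$, so each orbit $\Lambda\cdot(t^{n}x)$ has the same diameter $D:=\mathrm{diam}(\Lambda\cdot x)$, finite if $\Lambda$ were elliptic. Choosing $n$ so that $d(x,t^{n}x)$ exceeds the constant $R$ given by acylindricity for $\varepsilon=D$, every $\lambda\in\Lambda$ satisfies $d(x,\lambda x)\le D$ and $d(t^{n}x,\lambda t^{n}x)\le D$, so acylindricity bounds $|\Lambda|$, contradicting that $\Lambda$ is infinite. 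Hence $\Lambda$ has unbounded orbits; since the restriction of an acylindrical action is acylindrical, $\Lambda$ then contains a loxodromic element and its action on $S$ is either lineal (limit set of cardinality $2$) or of general type. If it were lineal, the limit set would be a two-point set $\{p,q\}\subseteq\partial S$, and since $\Lambda\trianglelefteq Q$ every $a\in Q$ would preserve $\{p,q\}$ setwise; then $a^{2}$ fixes $p$ and $q$ for every loxodromic $a\in Q$, forcing $\{a^{+},a^{-}\}=\{p,q\}$ for all such $a$, so $Q$ could not contain two independent loxodromics --- contradicting non-elementarity of the $Q$-action. Therefore the $\Lambda$-action is non-elementary. (Alternatively, one may simply cite the known fact that an infinite normal subgroup of an acylindrically hyperbolic group is again acylindrically hyperbolic.)

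\emph{Step 2: construction of $\Lambda_{0}$ and its centralizer.} By Step 1 choose independent loxodromics $g,h\in\Lambda$ and put $\Lambda_{0}:=\langle g,h\rangle\le\Lambda$; this is finitely generated by construction and infinite since $g$ has infinite order (replace $g,h$ by proper powers if a proper subgroup is wanted). As $\Lambda_{0}$ is generated by $g$ and $h$,
\[
C_{Q}(\Lambda_{0})=C_{Q}(g)\cap C_{Q}(h).
\]
For an acylindrical action, each loxodromic $g$ lies in a unique maximal virtually cyclic subgroup $E_{Q}(g)$ with $C_{Q}(g)\subseteq E_{Q}(g)$ (anything commuting with $g$ fixes $\{g^{+},g^{-}\}$), and if $g,h$ are independent then $E_{Q}(g)\cap E_{Q}(h)$ is finite: an infinite-order element of it would be loxodromic with fixed-point pair equal to both $\{g^{+},g^{-}\}$ and $\{h^{+},h^{-}\}$, contradicting independence. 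Hence $C_{Q}(\Lambda_{0})\subseteq E_{Q}(g)\cap E_{Q}(h)$ is finite, which is exactly the conclusion of Property (LIP).

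The main obstacle is Step 1: promoting ``$\Lambda$ is an infinite normal subgroup'' to ``$\Lambda$ acts non-elementarily''. That is the only place where normality is used in an essential way, and the only step that is not a formal manipulation with loxodromics and their elementary closures; everything afterwards is bookkeeping within the well-understood algebraic structure of acylindrically hyperbolic groups. The facts invoked in Step 2 (existence and properties of $E_{Q}(g)$, finiteness of $E_{Q}(g)\cap E_{Q}(h)$ for independent loxodromics) are standard and go back to the work of Osin and of Dahmani--Guirardel--Osin.
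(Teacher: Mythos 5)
Your proof is correct, and its overall skeleton matches the paper's: show $\Lambda$ has unbounded orbits via acylindricity and normality, produce two independent loxodromic elements inside $\Lambda$, take $\Lambda_0$ to be the subgroup they generate, and bound $C_Q(\Lambda_0)$ by the intersection of two virtually cyclic subgroups associated to loxodromics with distinct fixed-point pairs. Your unbounded-orbits argument and your finiteness argument for the centralizer are essentially those of the paper's Lemma~\ref{loxod} and its Lemma on $C_G(\langle g,g^a\rangle)$ (which cites \cite[Theorem 6.9]{acylindrically} where you invoke the elementary closures $E_Q(g)$ --- the same fact in different packaging). Where you genuinely diverge is the middle step. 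The paper finds a single loxodromic $g\in\Lambda$ and then, through the boundary-dynamics arguments of Lemmas~\ref{leh0} and~\ref{leh1}, constructs an element $a\in G$ (not necessarily in $\Lambda$) with $g$ and $aga^{-1}$ independent, using normality only to conclude $g^a\in\Lambda$. You instead show that the restricted $\Lambda$-action is itself non-elementary --- ruling out the lineal case because normality forces all of $Q$ to preserve the two-point limit set of $\Lambda$, contradicting non-elementarity of the $Q$-action --- and then apply Osin's trichotomy (Theorem~\ref{osin1.1}) to $\Lambda$ to harvest two independent loxodromics directly. This is cleaner: it replaces two somewhat fiddly case analyses on boundary fixed points with one structural appeal, at the small cost of having to note that the restriction of an acylindrical action to a subgroup is acylindrical (true, and you say so). Both uses of normality are essential and both arguments are complete; yours is arguably the more conceptual route, and it recovers the stronger intermediate statement that an infinite normal subgroup of an acylindrically hyperbolic group acts non-elementarily.
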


That acylindrically hyperbolic groups satisfy Property (LIP) will not surprise experts, but we give a proof in Section 2.\\

	Every right-angled Artin group is either cyclic, directly decomposable, or acylindrically hyperbolic (see \cite[Appendix-Example (d)]{acylindrically}). It is easy to see cyclic and directly decomposable right-angled Artin groups don't satisfy Property (LIP), so by Theorem \ref{main} the right-angled Artin groups that satisfy Property (LIP) are precisely the acylindrically hyperbolic ones.\\ 

\begin{corollary}\label{1.1} 
Suppose $G$ is a group, with an infinite normal subgroup $R$ which is a closed surface group, and suppose $G/R$ is acylindrically hyperbolic. If $G$ is $\mathrm{CAT}(0)$ then the monodromy $G/R\to\mathrm{Out}(R)$ of the extension is either finite or has finite kernel.
\end{corollary}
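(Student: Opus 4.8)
The plan is to obtain the statement as an immediate dichotomy built on Theorems \ref{ge} and \ref{main}. Write $\Gamma := G/R$ and let $\mu \colon \Gamma \to \mathrm{Out}(R)$ denote the monodromy of the extension $1 \to R \to G \to \Gamma \to 1$, i.e.\ the homomorphism induced by the conjugation action of $G$ on its normal subgroup $R$ (which is defined since $R$ is normal, with $R$ landing in $\mathrm{Inn}(R)$). First I would invoke Theorem \ref{main}: because $G/R$ is acylindrically hyperbolic, $\Gamma$ has Property (LIP).

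Next I would split into two cases according to the image $\mu(\Gamma)$. If $\mu(\Gamma)$ is finite, then the monodromy is finite and the first alternative of the conclusion holds, with nothing further to prove. If $\mu(\Gamma)$ is infinite, then the extension $1 \to R \to G \to \Gamma \to 1$ has infinite monodromy; since $R$ is a closed surface group, $\Gamma$ has Property (LIP) by the previous paragraph, and $G$ is $\mathrm{CAT}(0)$ by hypothesis, so Theorem \ref{ge} applies directly and yields that $\ker \mu$ is finite — the second alternative. Since "$\mu(\Gamma)$ finite" and "$\mu(\Gamma)$ infinite" exhaust all possibilities and line up exactly with the two alternatives in the statement, this completes the argument.

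I do not anticipate any genuine obstacle: all of the real content is already packaged in Theorems \ref{ge} and \ref{main}, and the only things to confirm are bookkeeping — that a closed surface group qualifies as a surface group in the sense of Theorem \ref{ge}, that an infinite normal surface subgroup produces an extension of the required type, and that an acylindrically hyperbolic group (being in particular infinite and not virtually cyclic) causes no degenerate behaviour. The "hard part", such as it is, is merely recognizing that invoking Theorem \ref{ge} is legitimate only once we know the monodromy is infinite, which is precisely why the conclusion must be stated as the disjunction "finite image \emph{or} finite kernel".
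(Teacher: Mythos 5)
Your proposal is correct and matches the paper's intended (implicit) argument exactly: Theorem \ref{main} supplies Property (LIP) for $G/R$, and the dichotomy on whether the monodromy image is finite or infinite reduces the second case to Theorem \ref{ge}. Nothing further is needed.
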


Since non-elementary hyperbolic groups are acylindrically hyperbolic, the following example is an application of Corollary \ref{1.1}:\\

\begin{corollary}
	Consider a surface bundle with  infinite monodromy $F\to E \to B$ with $B$ compact, and suppose the universal cover of $B$ is a $\delta$-hyperbolic space. If $\pi_1(E)$ is $\text{CAT}(0)$ and $\pi_1(B)$ is not elementary, then the monodromy representation $\pi_1(B)\to \text{Mod}(F)$ has finite kernel.
\end{corollary}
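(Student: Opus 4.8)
The plan is to recognize $\pi_1(B)$ as (essentially) an acylindrically hyperbolic group and then invoke Corollary \ref{1.1}; one degenerate case has to be handled separately. Since $B$ is compact, its universal cover $\tilde B$ is a geodesic metric space on which $\pi_1(B)$ acts properly discontinuously and cocompactly by isometries. By hypothesis $\tilde B$ is $\delta$-hyperbolic, so the \v{S}varc--Milnor lemma shows $\pi_1(B)$ is finitely generated and quasi-isometric to $\tilde B$; hence $\pi_1(B)$ is a word-hyperbolic group.

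I would then split into cases according to whether $\pi_1(B)$ is elementary. It cannot be finite, since the monodromy $\pi_1(B)\to\Mod(F)$ has infinite image. If $\pi_1(B)$ is two-ended, fix a finite-index infinite cyclic subgroup $Z\leq\pi_1(B)$; for any $K\trianglelefteq\pi_1(B)$ with $\pi_1(B)/K$ infinite, $K\cap Z$ is trivial (otherwise $K$ has finite index and the quotient is finite), so $K$ embeds in the finite group $\pi_1(B)/Z$ and is therefore finite. Taking $K=\ker(\pi_1(B)\to\Mod(F))$ shows the monodromy has finite kernel, which is the assertion in this case. This leaves the case where $\pi_1(B)$ is a non-elementary hyperbolic group, and such groups are acylindrically hyperbolic.

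In that case I apply Corollary \ref{1.1} to the short exact sequence $1\to\pi_1(F)\to\pi_1(E)\to\pi_1(B)\to 1$ induced by the bundle, with $G=\pi_1(E)$, $R=\pi_1(F)$ and $G/R=\pi_1(B)$. Two bookkeeping points remain: first, $\pi_1(F)$ is an \emph{infinite} closed surface group, for otherwise $F$ would be $S^2$ or $\mathbb{RP}^2$, both of which have trivial mapping class group, contradicting the infinite-monodromy hypothesis; second, the kernel of $\pi_1(B)\to\Mod(F)$ agrees with the kernel of the induced map $\pi_1(B)\to\Out(\pi_1(F))$, because $\Mod(F)\to\Out(\pi_1(F))$ is injective (Dehn--Nielsen--Baer in the orientable case, and the analogous statement otherwise), so ``finite kernel'' means the same thing in the two formulations. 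Corollary \ref{1.1} then gives that the monodromy is finite or has finite kernel, which is exactly the claim.

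I do not expect a genuinely hard step here: Theorem \ref{main} and Corollary \ref{1.1} do all the real work, and the only points that need attention are the reduction to the word-hyperbolic setting via \v{S}varc--Milnor, the elementary (two-ended) case, and checking that ``infinite monodromy'' forces $\pi_1(F)$ to be an infinite surface group.
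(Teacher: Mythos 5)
Your proposal is correct and follows the same route as the paper, which simply observes that non-elementary hyperbolic groups are acylindrically hyperbolic and invokes Corollary \ref{1.1}. The extra care you take with the two-ended case and with identifying $\ker(\pi_1(B)\to\mathrm{Mod}(F))$ with $\ker(\pi_1(B)\to\mathrm{Out}(\pi_1(F)))$ is sound and fills in details the paper leaves implicit.
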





\noindent\textbf{Acknowledgment:} I would like to thank my advisor, Daniel Groves, for introducing me to the subject and answering my questions. This paper would not have been written without his help. I would like to thank my coadvisor, Anatoly Libgober, for
constant support and warm encouragement. I also wish to thank Claudio Llosa Isenrich and Pierre Py for helpful comments. I am also grateful to the referee(s) for many helpful comments which improved the paper.\\

{\section{Acylindrically hyperbolic groups have Property (LIP) }}\label{1acy}
In this section, we will first introduce the definition of acylindrically hyperbolic groups and some results of Osin from \cite{acylindrically}. Then we will give the proof Theorem \ref{main}.\\

\begin{definition}\label{loxo}
	Given a group $G$ acting on a $\delta$-hyperbolic geodesic space $S$ (\cite{Bridson}-III.H.Definition 1.20), an element $g\in G$ is called \emph{elliptic} if some (equivalently, every) orbit of $g$ is bounded, and \emph{loxodromic} if the map $\mathbb Z\to S$ defined by $n\mapsto  g^n\cdot s$ is a quasi-isometric embedding for some (equivalently, every) $s\in S$. Define $\alpha_g$ to be a geodesic connecting $s$ and $g\cdot s$, it leads to a quasi-geodesic (\cite{Bridson}-I.8.22) $l_g:=\bigcup_{i\in \mathbb Z}g^i\cdot \alpha_g$ called the \emph{axis} of $g$.  Moreover, any loxodromic element $g$ has exactly two fixed points on the Gromov boundary $\partial S$, which are denoted by $g^\pm$.
\end{definition}	

\begin{definition}	
	 Loxodromic elements $g, h\in G$ are called \emph{independent} if the sets $\{g^{\pm}\}$ and $\{h^{\pm}\}$ are disjoint. An action of a group $G$ on a $\delta$-hyperbolic space $S$ is called \emph{elementary} if the limit set of $G$ on $\partial S$ contains
	at most $2$ points.
\end{definition}

\begin{definition}\label{ac}
	An action of a group $G$ on a metric space $S$ is called \emph{acylindrical} if for every $\epsilon> 0$ there exist $R, N > 0$ such that for every two points $x, y$ with $ d(x, y)\ge R$, there are at most $N$ elements
	$g\in G$ satisfying $d(x, g x)\le\epsilon$ and $d(y, g y) \le\epsilon$.
\end{definition}

\begin{theorem}[\cite{acylindrically}-Theorem 1.1]\label{osin1.1}
	Let $G$ be a group acting acylindrically on a $\delta$-hyperbolic space. Then
	$G$ satisfies exactly one of the following three conditions.\\
	(a) $G$ has bounded orbits.\\
	(b) $G$ is virtually cyclic and contains a loxodromic element.\\
	(c) $G$ contains infinitely many (pairwise) independent loxodromic elements.\\
\end{theorem}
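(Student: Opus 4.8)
The plan is to run the standard Gromov-type classification of isometric actions on $\delta$-hyperbolic spaces, using acylindricity (Definition \ref{ac}) to kill the degenerate cases. The foundational input I would take as given is a lemma of Bowditch: under an acylindrical action on a $\delta$-hyperbolic space, every element is either \emph{elliptic} or \emph{loxodromic} (Definition \ref{loxo}) — there are no parabolic isometries — and the stable translation lengths of loxodromic elements are bounded below by a positive constant depending only on $\delta$ and the acylindricity constants. With that in hand I would split into cases according to whether $G$ contains a loxodromic element, and then according to how its fixed-point data sit on $\partial S$.

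First suppose $G$ has no loxodromic element, so every element is elliptic. I claim the orbits are bounded, which is case (a). If instead some orbit were unbounded, the action would be of horocyclic type, fixing a unique boundary point and forced to contain parabolics, contradicting Bowditch's dichotomy above. Concretely, acylindricity itself manufactures a loxodromic whenever orbits are unbounded: taking $x$ and a sequence $g_n$ with $d(x,g_n x)\to\infty$ and feeding two far-apart points into Definition \ref{ac} forces some element to act with positive stable translation length. Either way, the absence of loxodromics yields bounded orbits.

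Now suppose $G$ contains a loxodromic $g$, with fixed points $\{g^+,g^-\}\subset\partial S$ and axis $l_g$. The decisive dichotomy is whether $G$ contains a loxodromic independent of $g$. If \emph{every} loxodromic of $G$ has fixed-point set $\{g^+,g^-\}$ (the lineal case), I would show $G$ is virtually cyclic, case (b): the whole group then preserves $\{g^+,g^-\}$ setwise, giving a homomorphism $G\to\mathbb{Z}/2$ recording whether an element swaps the two points; let $G_0$ be its kernel. On $G_0$ the signed translation length along $l_g$ defines a homomorphism $\tau\colon G_0\to\mathbb{R}$, whose image is discrete by Bowditch's positive lower bound, hence trivial or infinite cyclic. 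The kernel of $\tau$ consists of elliptic elements fixing both $g^\pm$ and moving a fixed point of $l_g$ by a uniformly bounded amount, so acylindricity bounds their number and $\ker\tau$ is finite; thus $G_0$, and hence $G$, is virtually cyclic. If instead $G$ contains a loxodromic $h$ independent of $g$ (the general-type case), I would produce infinitely many pairwise independent loxodromics, case (c), by a conjugation/ping-pong argument: the conjugates $g^k h g^{-k}$ are loxodromic with fixed points $g^k\cdot h^{\pm}$, and since $g$ acts on $\partial S$ with north--south dynamics fixing $g^\pm\neq h^\pm$, these fixed-point pairs are distinct and can be arranged pairwise disjoint along an infinite set of $k$. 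Mutual exclusivity is then immediate: case (a) has only elliptics, and case (b) has a two-point limit set and so cannot contain the four distinct boundary points required by a pair of independent loxodromics.

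The main obstacle is the lineal-implies-virtually-cyclic step: turning the coarse translation length into an honest homomorphism with discrete image, and then using Definition \ref{ac} to bound the finite kernel, is where acylindricity does essential work. The other delicate point, which is really the engine of the whole trichotomy, is the clean deduction from acylindricity that there are no parabolic elements and that loxodromic translation lengths are uniformly positive — precisely Bowditch's lemma — since without it the horocyclic and focal types reappear and the three-way classification breaks down.
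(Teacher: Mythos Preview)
The paper does not prove this theorem at all: Theorem~\ref{osin1.1} is quoted verbatim from Osin's paper \cite{acylindrically} and used as a black box (it is invoked once, in the proof of Lemma~\ref{loxod}). There is therefore no proof in the paper to compare your proposal against.

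For what it is worth, your sketch follows the standard line of argument and is essentially how Osin himself proves it: Bowditch's lemma rules out parabolics and gives a positive lower bound on loxodromic translation lengths; then one runs the Gromov-type classification (elliptic/lineal/general type), with acylindricity eliminating the horocyclic and focal cases and forcing the lineal case to be virtually cyclic. The one place where your outline is a bit glib is the ``no loxodromics implies bounded orbits'' step --- producing a loxodromic from an unbounded orbit under acylindricity is genuinely the technical heart of the argument and not just a matter of ``feeding two far-apart points into Definition~\ref{ac}''; but you correctly flag this (via Bowditch's lemma) as the essential input.
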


	If the action is acylindrical, non-elementarity is equivalent to
	condition (c) from Theorem \ref{osin1.1}. (See \cite[p.852]{acylindrically}).\\

	A group is called \emph{acylindrically hyperbolic} if it admits a non-elementary acylindrical action on a $\delta$-hyperbolic space.  \\

\begin{remark}
	The definition of acylindricity is due to Bowditch \cite{bowditch2008tight}, the earliest version of acylindrically hyperbolicity is given by Sela \cite{sela1997acylindrical} and the definition of acylindrically hyperbolic group is due to \cite{acylindrically}. Some examples of acylindrically hyperbolic groups are:\\
	\noindent (1) Non-elementary hyperbolic groups;\\
	\noindent (2) Non-elementary relatively hyperbolic groups;\\
	\noindent (3) Mapping class groups of almost every surface of finite type;\\ \noindent (4) Non-cyclic directly indecomposable right-angled Artin groups; \\ 
	\noindent (5) The fundamental group of every irreducible, compact
	$3$-manifold with a non-trivial JSJ-decomposition (see \cite{minasyan2015}-Theorem 2.8).
\end{remark}

	

\begin{notation}
	Fix an acylindrically hyperbolic group $G$ and a $\delta$-hyperbolic space $X$ 
	on which $G$ acts acylindrically.

\end{notation}

\begin{lemma}\label{loxod}
	If $\Lambda \unlhd G$ is infinite, then $\Lambda$ contains a loxodromic element, say $g$.
\end{lemma}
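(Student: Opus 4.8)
The plan is to restrict the acylindrical $G$-action on $X$ to $\Lambda$, apply Osin's trichotomy (Theorem \ref{osin1.1}) to $\Lambda$, and then rule out the ``bounded orbits'' case using that $\Lambda$ is normal.

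First I would record that acylindricity (Definition \ref{ac}) passes to subgroups: the same constants $R,N$ that witness acylindricity of the $G$-action for a given $\epsilon$ also witness it for $\Lambda\le G$, since a collection of elements of $\Lambda$ satisfying the two displacement inequalities is in particular such a collection of elements of $G$. Thus $\Lambda$ acts acylindrically on the $\delta$-hyperbolic space $X$, and Theorem \ref{osin1.1} says $\Lambda$ satisfies exactly one of (a), (b), (c). In cases (b) and (c) the subgroup $\Lambda$ contains a loxodromic element and we are done; here I would note that being loxodromic is a statement about the orbit map $n\mapsto g^n\cdot s$ and does not depend on whether $g$ is viewed inside $\Lambda$ or inside $G$.

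The remaining task is to exclude case (a). Suppose $\Lambda$ has bounded orbits; fix $x\in X$ and set $D:=\sup_{h\in\Lambda} d(x,hx)<\infty$. Since $G$ is acylindrically hyperbolic its action on $X$ is non-elementary, so by Theorem \ref{osin1.1} (via the remark that non-elementarity is equivalent to condition (c)) the group $G$ contains a loxodromic element $g$. Applying Definition \ref{ac} with $\epsilon:=D$ produces constants $R,N$; choose $n$ with $d(x,g^n x)\ge R$, which is possible because $n\mapsto g^n x$ is a quasi-isometric embedding. Then for every $h\in\Lambda$ we have $d(x,hx)\le D$ and also
\[
d\big(g^n x,\, h\, g^n x\big) \;=\; d\big(x,\, (g^{-n}h g^n)\cdot x\big) \;\le\; D,
\]
since $g^{-n}h g^n\in\Lambda$ by normality. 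Hence all of $\Lambda$ lies in the set of elements moving both $x$ and $y:=g^n x$ by at most $\epsilon$, while $d(x,y)\ge R$; acylindricity bounds this set by $N$, contradicting $|\Lambda|=\infty$. So case (a) cannot occur, and $\Lambda$ contains a loxodromic element.

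The computation is routine; the one genuine idea — and therefore the only place a subtlety could hide — is in the last step, where the normality of $\Lambda$ is used to transfer the displacement bound at $x$ into a displacement bound at the far-away point $g^n x$ by conjugating with powers of $g$. Beyond that, the only care needed is to apply the trichotomy to the correct group at each stage (to $\Lambda$ to get the three cases, and to $G$ to produce the loxodromic element used in excluding case (a)).
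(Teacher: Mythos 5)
Your proposal is correct and follows essentially the same route as the paper: apply Osin's trichotomy to the restricted (still acylindrical) $\Lambda$-action, and rule out bounded orbits by using normality to conjugate the displacement bound at a basepoint to a far-away translate under a loxodromic element of $G$, contradicting acylindricity. The only cosmetic difference is that you push the basepoint out along powers $g^n$ of a single loxodromic element, whereas the paper chooses a loxodromic $g_1$ with $d(g_1\cdot S,S)\ge R$ directly; the underlying idea is identical.
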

\begin{proof}
 By Theorem \ref{osin1.1}, it suffices to show $\Lambda$ contains unbounded orbits. Suppose all the orbits of $\Lambda$ were bounded and take $S$ to be a bounded orbit. Define $\epsilon$ to be the diameter of $S$. By the definition of acylindrical action, there exist $R,N>0$ such that for every $x,y$ with $d(x,y)\ge R$, there are at most $N$ elements $e'\in G$ satisfying 
	$$d(x,e' x)\le\epsilon\quad \textrm{and}\quad d(y,e' y)\le\epsilon.$$
	
	Let $g_1\in G$ be a loxodromic element with $d(g_1\cdot S,S)\ge R$. By definition, there are at most $N$ elements $\{g_i'\}$ so that for every $x\in S$, $y\in g_1\cdot S$,
	$$d(x,g_i' x)\le\epsilon\quad \textrm{and}\quad d(y,g_i' y)\le\epsilon.$$
	
	Since $\Lambda$ is normal, $g_1^{-1} h g_1\in \Lambda$ for each $h\in \Lambda$. Note $h$ and $g_1^{-1} h g_1 $ preserve $S$ by assumption, thus $h$ preserves $S$ and also $g_1\cdot S$ since $h=g_1 g_1^{-1} h g_1 g_1^{-1}$. So for every $x\in S$, $y\in g_1\cdot S$,
	$$d(x,h x)\le\epsilon\quad \textrm{and}\quad d(y,h y)\le\epsilon.$$\\
	
	However, $|\Lambda|=\infty>N$, so we get a contradiction. So $\Lambda$ does not have bounded orbits and hence contains a loxodromic element, say $g$. 
\end{proof}



\begin{lemma}\label{leh0}
Let $g$ a loxodromic element in $\Lambda$, there always exists a loxodromic element, say $a_1\in G$, such that $\{g^{\pm}\}$ and $\{a_1^{\pm}\}$ are disjoint.
\end{lemma}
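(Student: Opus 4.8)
The plan is to exploit the non-elementarity of the action together with Osin's trichotomy. Since $G$ is acylindrically hyperbolic, by definition its action on $X$ is non-elementary and acylindrical, so by the equivalence recorded after Theorem \ref{osin1.1} the action falls into case (c): $G$ contains infinitely many pairwise independent loxodromic elements, say $h_1, h_2, h_3, \dots$, meaning that the fixed-point pairs $\{h_i^\pm\}\subset\partial X$ are pairwise disjoint.

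Next I would use that the loxodromic element $g$ coming from Lemma \ref{loxod} has exactly the two fixed points $g^+, g^-$ on $\partial X$. For each index $i$ with $\{h_i^\pm\}\cap\{g^\pm\}\neq\emptyset$, pick a point $p_i$ in this intersection. If $p_i=p_j$ for some $i\neq j$, then $h_i$ and $h_j$ would share a boundary fixed point, contradicting their independence; hence $i\mapsto p_i$ is an injection into the two-element set $\{g^+, g^-\}$, so at most two of the $h_i$ satisfy $\{h_i^\pm\}\cap\{g^\pm\}\neq\emptyset$. Since there are infinitely many $h_i$, I may choose an index $i$ with $\{h_i^\pm\}\cap\{g^\pm\}=\emptyset$ and set $a_1:=h_i$; this is a loxodromic element of $G$ with $\{g^\pm\}$ and $\{a_1^\pm\}$ disjoint, as required.

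I do not anticipate any real obstacle: the only point requiring care is invoking the correct alternative of Theorem \ref{osin1.1}, namely that an acylindrically hyperbolic group actually contains \emph{infinitely many} pairwise independent loxodromics (case (c)) rather than merely one, which is precisely the cited identification of non-elementarity with case (c). The remainder is an elementary counting argument on the two-point set $\{g^+, g^-\}$.
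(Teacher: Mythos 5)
Your argument is correct, and it is genuinely different from (and cleaner than) the one in the paper. You argue directly: since the fixed acylindrical action of $G$ on $X$ is non-elementary, condition (c) of Theorem \ref{osin1.1} supplies infinitely many pairwise independent loxodromics $h_1,h_2,h_3,\dots$, and because their fixed-point pairs $\{h_i^\pm\}$ are pairwise disjoint, each of the two points $g^+,g^-$ can lie in at most one of them; hence at most two indices $i$ have $\{h_i^\pm\}\cap\{g^\pm\}\neq\emptyset$, and any of the remaining (in fact three independent loxodromics already suffice) serves as $a_1$. The paper instead argues by contradiction: assuming every loxodromic shares a boundary fixed point with $g$, it takes a pair of independent loxodromics $k,h$, normalizes to $h^-=g^-$ and $k^-=g^+$, and then runs a case analysis on the translates $h^n\cdot k^\pm$ to show that one of the conjugates $k^h$ or $k^{h^2}$ is independent of $g$, contradicting the assumption. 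Both proofs draw on exactly the same input (alternative (c) of Osin's trichotomy for the non-elementary acylindrical action), but your pigeonhole argument replaces the conjugation trick and its several boundary-point cases with a two-line counting step, so it is shorter and less error-prone; the paper's version has the minor side benefit of exhibiting the independent element explicitly as a conjugate of a given one. Your proof is a valid substitute.
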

\begin{proof}
	Suppose there is no such $a_1$. Find a pair of  independent loxodromic elements $k,h$ with $k,h\not=g$. Without loss of  generality, we may assume $h^-=g^-$ and $k^-=g^+$. Consider the loxodromic element $k^h:=hkh^{-1}$, with fixed set $\{h\cdot k^\pm\}$. \\
	
	First we claim that $h^n\cdot k^-\not\in \{g^\pm\}$ holds for any $n\in \mathbb Z\setminus \{0\}$: Suppose $h^n\cdot k^-=g^-$, then by $g^-=h^-$, it follows that $k^-=h^{-n}\cdot h^-=h^-$, contradiction. Suppose $h^n\cdot k^-=g^+$, then by $k^-=g^+$, $h^n\cdot k^-=k^-$, thus $k^{-1}\in \{h^\pm\}$, contradiction. So the claim is proved. In particular, $h\cdot k^-\not\in \{g^\pm\}$.\\

	Now we claim that  for any $n\in\mathbb Z$, $h^n\cdot k^+\not=g^-$.  Suppose $h^n\cdot k^+=g^-$, then since $h^-=g^-$, $h^n\cdot k^+=h^-$, thus $k^+=h^-$, contradiction. \\
	
   If $h\cdot k^+=g^+$, then $h\cdot k^+=k^-$. We claim $h^2\cdot k^+\not=g^+$. Otherwise $h\cdot g^+=h\cdot h\cdot k^+=g^+$ and $g^+\in \{h^\pm\}$, contradiction. \\

	So either $g$ and $k^h$ are independent or $g$ and $k^{h^2}$ are independent and the proof is complete. 
\end{proof}

\begin{lemma}\label{leh1}
	Let $g$ a loxodromic element in $\Lambda$, there exists a loxodromic element $a\in G$ such that $g$, $a g a^{-1}$ are independent. 
\end{lemma}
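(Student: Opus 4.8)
The plan is to take $a$ to be a suitable nonzero power of the loxodromic element $a_1$ produced by Lemma~\ref{leh0}, which satisfies $\{a_1^{\pm}\}\cap\{g^{\pm}\}=\emptyset$. Observe first that $a_1^n$ is loxodromic for every $n\neq 0$, that its pair of boundary fixed points is again exactly $\{a_1^{\pm}\}$ (it fixes $a_1^{+}$ and $a_1^{-}$, and by Definition~\ref{loxo} a loxodromic element has exactly two boundary fixed points), and that $(a_1^n)\,g\,(a_1^n)^{-1}$ has fixed points $\{a_1^n\cdot g^{+},\,a_1^n\cdot g^{-}\}$. Hence it suffices to find $n\neq 0$ with $\{a_1^n\cdot g^{+},\,a_1^n\cdot g^{-}\}\cap\{g^{+},g^{-}\}=\emptyset$, and then set $a:=a_1^n$.

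To find such an $n$ I would argue by contradiction in the combinatorial style of the proof of Lemma~\ref{leh0}. If no such $n$ existed, then for every $n\neq 0$ at least one of the four equalities
\[
a_1^n\cdot g^{+}=g^{+},\qquad a_1^n\cdot g^{+}=g^{-},\qquad a_1^n\cdot g^{-}=g^{+},\qquad a_1^n\cdot g^{-}=g^{-}
\]
would hold. Applying this with $n=1,2,3,4,5$ and the pigeonhole principle, some one of these four equalities holds for two distinct exponents $1\le n_1<n_2\le 5$; in each of the four cases, composing the two equalities gives $a_1^{\,n_2-n_1}\cdot\xi=\xi$ for some $\xi\in\{g^{+},g^{-}\}$ with $n_2-n_1\neq 0$ (for instance $a_1^{n_1}\cdot g^{+}=g^{-}=a_1^{n_2}\cdot g^{+}$ yields $a_1^{\,n_2-n_1}\cdot g^{+}=g^{+}$, and the same conclusion appears in the other cases). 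Then $\xi$ is a boundary fixed point of the loxodromic element $a_1^{\,n_2-n_1}$, so $\xi\in\{a_1^{\pm}\}$, contradicting $\{a_1^{\pm}\}\cap\{g^{\pm}\}=\emptyset$. Thus some $n\neq 0$ works, and $a:=a_1^n$ is the desired loxodromic element.

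I expect the only real obstacle to be the routine bookkeeping: checking that all four coincidence cases, after pigeonholing, genuinely collapse to ``a nontrivial power of $a_1$ fixes $g^{+}$ or $g^{-}$'', which in turn relies only on the standard fact that powers of a loxodromic isometry share its pair of boundary fixed points. As an alternative to the counting argument, one could instead invoke the North--South dynamics of $a_1$ on the Gromov boundary: since $a_1^{+}\notin\{g^{\pm}\}$, for all sufficiently large $n$ both $a_1^n\cdot g^{+}$ and $a_1^n\cdot g^{-}$ lie in a neighbourhood of $a_1^{+}$ disjoint from $\{g^{\pm}\}$, so that $a:=a_1^n$ works; this avoids the case analysis but uses a slightly heavier input.
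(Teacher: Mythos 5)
Your proof is correct and follows essentially the same route as the paper: both take the element $a_1$ from Lemma~\ref{leh0} and produce $a$ as a nonzero power of $a_1$, using that every nontrivial power of $a_1$ is loxodromic with fixed set $\{a_1^{\pm}\}$ disjoint from $\{g^{\pm}\}$, and that $a g a^{-1}$ has fixed points $a\cdot g^{\pm}$. The only difference is bookkeeping: the paper checks directly that $a_1$ or $a_1^2$ already works, whereas your pigeonhole over $n=1,\dots,5$ (or the North--South dynamics alternative) establishes existence of a suitable power less explicitly but just as validly.
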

\begin{proof}
	By the Lemma \ref{leh0}, we know there always exists a loxodromic element $a_1$ such that $\{g^{\pm}\}$ and $\{a_1^{\pm}\}$ are disjoint. \\

	We claim that either $\{a_1\cdot g^\pm\},\{g^\pm\}$ are disjoint or $\{a_1^2\cdot g^\pm\},\{g^\pm\}$ are disjoint. If $a_1\cdot g^+=g^-$, then ${a_1}^2\cdot g^+\not=g^-$ since otherwise it implies $a_1\cdot g^-=g^-$, which violates the assumption that $a_1$ has only two fixed points on the Gromov boundary. Observe ${a_1}^2$ is a loxodromic element with fixed set $\{a_1^{\pm}\}$ on the Gromov boundary. It follows ${a_1}^2\cdot g^+\not=g^+$ and ${a_1}^2\cdot g^-\not=g^-$. Also ${a_1}^2\cdot g^-\not=g^+$ since otherwise it implies ${a_1}^3\cdot g^-=a_1\cdot g^+=g^-$ , while ${a_1}^3$ is a loxodromic element with fixed set $\{a_1^{\pm}\}$ on the Gromov boundary. Similarly, if $a_1\cdot g^-=g^+$, then $\{{a_1}^2\cdot g^{\pm}\}$ and $\{g^{\pm}\}$ are disjoint. Now we can conclude that we can always find a loxodromic element $a$ (either $a=a_1$ or $a=a_1^2$) so that $\{a\cdot g^{\pm}\}$ and $\{g^{\pm}\}$ are disjoint.\\ 

	If $g$ is loxodromic then so is its conjugate $a g a^{-1}$. Observe the fixed points of $a g a^{-1}$ are $a\cdot g^{\pm}$,  so we conclude that $g$ and $a g a^{-1}$ are independent.
\end{proof}

\begin{lemma}
	Given $H=\langle g,g^a\rangle$, the centralizer $C_G(H)$ is a finite group.
\end{lemma}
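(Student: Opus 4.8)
The plan is to prove that the restricted action of $C:=C_G(H)$ on $X$ has bounded orbits, and then to read off finiteness of $C$ from acylindricity, imitating the argument in the proof of Lemma \ref{loxod}.

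First I would show that $C$ contains no loxodromic element. Every $c\in C$ commutes with both $g$ and $g^a=aga^{-1}$. Since $c$ conjugates $g$ to $g$, it must carry the two-point fixed set $\{g^+,g^-\}$ of $g$ on $\partial X$ onto the fixed set of $cgc^{-1}=g$, so $c$ stabilizes $\{g^+,g^-\}$; the same argument applied to $g^a$ shows $c$ stabilizes $\{(g^a)^+,(g^a)^-\}$. By Lemma \ref{leh1} the elements $g$ and $g^a$ are independent, so $g^+,g^-,(g^a)^+,(g^a)^-$ are four pairwise distinct points, and $c^2$ fixes all four of them. If $c$ were loxodromic, then $c^2$ would be loxodromic as well and would therefore have exactly two fixed points on $\partial X$ by Definition \ref{loxo} — a contradiction. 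Hence no element of $C$ is loxodromic.

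Next I would apply Theorem \ref{osin1.1} to the (still acylindrical) action of the subgroup $C$ on $X$: alternatives (b) and (c) each supply a loxodromic element of $C$, which the previous step rules out, so $C$ must have bounded orbits. To finish, fix $x_0\in X$ and let $D$ be the diameter of the bounded set $C\cdot x_0$, so that $d(x_0,cx_0)\le D$ for all $c\in C$. Since $C\le C_G(g)$, each $c\in C$ commutes with every power $g^n$, so $c\cdot g^nx_0=g^n\cdot cx_0$ and hence $d(g^nx_0,\,c\,g^nx_0)=d(x_0,cx_0)\le D$. Let $R=R(D)$ and $N=N(D)$ be the constants furnished by acylindricity (Definition \ref{ac}) for $\epsilon=D$. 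Because $g$ is loxodromic, $d(x_0,g^nx_0)\to\infty$, so I can choose $n$ with $d(x_0,g^nx_0)\ge R$; then every $c\in C$ moves both $x_0$ and $g^nx_0$ by at most $D$ while $d(x_0,g^nx_0)\ge R$, so acylindricity forces $|C|\le N<\infty$.

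The main obstacle is the first step: one has to exploit the independence of $g$ and $g^a$ — which is exactly the reason $H$ is generated by $g$ together with an independent conjugate of $g$, rather than by $g$ alone — in combination with the fact that a loxodromic element has precisely two fixed points on the boundary, in order to eliminate all loxodromic elements from the centralizer. After that, the passage through Theorem \ref{osin1.1} and the counting argument are routine and essentially repeat the end of the proof of Lemma \ref{loxod}.
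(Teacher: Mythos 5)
Your proof is correct, but it takes a genuinely different route from the paper. The paper's proof is structural: it invokes Osin's result that the centralizer of a loxodromic element is virtually cyclic with $\langle g\rangle$ of finite index (\cite[Theorem 6.9]{acylindrically}), places $C_G(H)$ inside $C_G(\langle g\rangle)\cap C_G(\langle g^a\rangle)$, and then argues that this intersection must be finite because an infinite intersection would force $g$ and $g^a$ to have common powers, contradicting their independence. Your argument instead stays entirely at the level of the dynamics on $X$ and $\partial X$: you use independence of $g$ and $g^a$ (Lemma \ref{leh1}) plus the fact that a loxodromic element fixes exactly two boundary points (Definition \ref{loxo}) to rule out loxodromics in $C_G(H)$, then run the trichotomy of Theorem \ref{osin1.1} to get bounded orbits, and finish with the same acylindricity counting argument used in Lemma \ref{loxod} — here exploiting that elements of the centralizer have the same displacement at $x_0$ and at $g^n x_0$. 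What each approach buys: the paper's is shorter but leans on an external structural theorem (and on a somewhat delicate claim about intersections of virtually cyclic groups), whereas yours is self-contained given only the results already quoted in this section, and it makes the role of the independence of $g$ and $g^a$ completely explicit. Both are valid; each step of yours (the orbit of $\mathrm{Fix}(g)$ under a centralizing element, the restriction of an acylindrical action to a subgroup remaining acylindrical, the divergence $d(x_0,g^nx_0)\to\infty$) checks out.
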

\begin{proof}
	By \cite[Theorem 6.9]{acylindrically}, since $g$ is loxodromic, $C_G(\langle g\rangle)$ is virtually cyclic, thus the infinite cyclic subgroup $\langle g\rangle$ is of finite index in $C_G(\langle g\rangle)$. For the same reason, $\langle g^a \rangle$ is of finite index in $C_G(\langle g^a\rangle)$. Since $H=\langle g,g^a\rangle$, $$C_G(H)\subset C_G(\langle g\rangle)\cap C_G(\langle g^a\rangle).$$ Also note that the intersection of two virtually cyclic groups is either finite, or else it is virtually cyclic and it contains a common power of $g$ and $g^a$. However, $g$ and $g^a$ are two loxodromic elements with different fixed point sets, so $g$ and $g^a$ do not have common powers.
	Thus $C_G(\langle g\rangle)\cap C_G(\langle g^a\rangle)$ is finite, hence $C_G(H)$ is finite.
\end{proof}

	

\begin{corollary}\label{5}
	Let $Q$ be an acylindrically hyperbolic group and $\Lambda$ be an infinite normal subgroup of $Q$. Then there exists an infinite finitely generated subgroup $\Lambda_0<\Lambda$, so that $C_{Q}(\Lambda_0)$ is finite.\\
	
\end{corollary}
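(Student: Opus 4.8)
The plan is to obtain Corollary~\ref{5} by assembling the lemmas proved above, with $Q$ playing the role of the fixed acylindrically hyperbolic group acting acylindrically on $X$. The strategy is: extract a loxodromic element of $\Lambda$, conjugate it (staying inside $\Lambda$ by normality) to produce an independent pair, take the subgroup they generate, and quote the centralizer lemma.

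Concretely, I would first apply Lemma~\ref{loxod} to the infinite normal subgroup $\Lambda \unlhd Q$ to get a loxodromic element $g \in \Lambda$. Then Lemma~\ref{leh1} furnishes a loxodromic element $a \in Q$ such that $g$ and $g^a = aga^{-1}$ are independent, i.e. $\{g^\pm\}$ and $\{(g^a)^\pm\}$ are disjoint. The one place the \emph{normality} of $\Lambda$ (and not merely its infinitude) is used is to guarantee $g^a \in \Lambda$: since $g \in \Lambda$ and $\Lambda \unlhd Q$, we have $aga^{-1} \in \Lambda$. Hence I may set $\Lambda_0 := \langle g, g^a\rangle \le \Lambda$, which is finitely generated by construction and infinite because the loxodromic element $g$ has infinite order (its powers quasi-isometrically embed $\mathbb{Z}$).

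It remains to see $C_Q(\Lambda_0)$ is finite, and this is exactly the content of the lemma immediately preceding the corollary applied with $H = \langle g, g^a\rangle = \Lambda_0$: one has $C_Q(\Lambda_0) \subseteq C_Q(\langle g\rangle) \cap C_Q(\langle g^a\rangle)$, an intersection of two virtually cyclic groups (by \cite[Theorem 6.9]{acylindrically}) that is finite because $g$ and $g^a$ have disjoint fixed-point sets on $\partial X$ and therefore share no common power. This produces the required $\Lambda_0$. I do not expect a genuine obstacle here: all of the substantive work — producing a loxodromic element inside a normal subgroup, arranging an independent conjugate, and controlling the centralizer — has already been carried out in Lemmas~\ref{loxod}, \ref{leh1}, and the centralizer lemma; the corollary is their formal combination, the only subtlety being the bookkeeping point that conjugation by $a$ keeps $g^a$ inside $\Lambda$.
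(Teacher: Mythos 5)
Your proposal is correct and is exactly the assembly the paper intends: Lemma~\ref{loxod} supplies the loxodromic $g\in\Lambda$, Lemma~\ref{leh1} supplies $a$ with $g$ and $g^a$ independent, normality keeps $g^a$ in $\Lambda$, and the centralizer lemma finishes with $\Lambda_0=\langle g,g^a\rangle$. The paper leaves this combination implicit, and your write-up fills it in faithfully, including the one genuine bookkeeping point (that conjugation preserves $\Lambda$).
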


The previous corollary finishes the proof of Theorem \ref{main}.\\

\section{Proof of Theorem \ref{ge}}\label{s3}
Now we start the proof of  Theorem \ref{ge}. It follows from the proof of \cite[Theorem 3]{Py} with only minor changes. The proofs from \cite{Py} of Lemma \ref{30}, Proposition \ref{31} and Proposition \ref{33} below work in our setting exactly as written. We omit some details which are included in \cite{Py}, the reader may find them in Section 3 of \cite{Py}. 

Suppose we are given an exact sequence
$$1 \to R \to G \xrightarrow{p} Q \to 1,$$ where $R$ is a surface group, $G$ is CAT(0), $Q$ has Property (LIP) and the natural monodromy morphism $\phi: Q \to \text{Out}(R)$ is infinite. We consider the centralizer of $R$ in $G$, denoted by $\Lambda$. The proof is by contradiction, so assume $\phi$ has infinite kernel.\\


\begin{lemma}\cite[Lemma 30]{Py}\label{30}
	Let $G$ be a group. Assume that $G$ has a normal subgroup $R$ which is a surface group and let $p: G\to G/R$ be the quotient morphism. Then the centralizer $\Lambda$ of $R$ in $G$ is normal in $G$. The restriction of $p$ to $\Lambda$ is an isomorphism onto the
	kernel of the monodromy morphism $G/R\to \mathrm{Out}(R)$.
\end{lemma}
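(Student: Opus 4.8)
The plan is to verify the three assertions of the statement in sequence, all by direct definition-chasing; I do not expect a real obstacle, which is presumably why Llosa Isenrich and Py state it as a lemma rather than a theorem. Throughout I write $\Lambda = C_G(R)$ for the centralizer of $R$ in $G$ and $\phi\colon G/R \to \mathrm{Out}(R)$ for the monodromy, recalling that by definition $\phi$ is the map induced on $G/R$ by the conjugation homomorphism $c\colon G \to \mathrm{Aut}(R)$ followed by the projection $\mathrm{Aut}(R) \to \mathrm{Out}(R)$; this descends to $G/R$ because $c(R) = \mathrm{Inn}(R)$.

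\emph{First}, I would check $\Lambda \unlhd G$, which is the general fact that the centralizer of a normal subgroup is normal. Given $g \in G$, $x \in \Lambda$ and $r \in R$, normality of $R$ gives $g^{-1} r g \in R$, so $x(g^{-1} r g)x^{-1} = g^{-1} r g$; conjugating by $g$ yields $(gxg^{-1})\, r\, (gxg^{-1})^{-1} = r$. Since $r \in R$ was arbitrary, $gxg^{-1} \in \Lambda$.

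\emph{Second}, I would show $\pi|_\Lambda$ is injective. Its kernel is $\Lambda \cap R = C_G(R) \cap R = Z(R)$, and a surface group in the sense relevant here (the fundamental group of a closed hyperbolic surface) has trivial center, so $\ker(\pi|_\Lambda) = 1$. This is the only place where the hypothesis that $R$ is a surface group enters. \emph{Third}, I would identify the image: for $g \in G$, the class $gR$ lies in $\ker\phi$ exactly when $c(g) \in \mathrm{Inn}(R)$, i.e.\ there is $r \in R$ with $gxg^{-1} = rxr^{-1}$ for all $x \in R$; this says precisely that $r^{-1}g \in C_G(R) = \Lambda$, hence $g \in R\Lambda = \Lambda R$, i.e.\ $gR \in \pi(\Lambda)$. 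Thus $\ker\phi = \pi(\Lambda)$, and combined with the previous step, $\pi|_\Lambda$ is an isomorphism from $\Lambda$ onto $\ker\phi$.

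The only point requiring any care, as flagged in the second step, is the centerlessness of $R$ (which also implicitly excludes the torus and Klein bottle cases that do not arise in the CAT(0) hyperbolic-bundle setting); everything else is formal, and in particular the argument does not use that $G$ is CAT(0) or any property of $G/R$.
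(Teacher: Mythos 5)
Your proof is correct, and it is the standard definition-chasing argument; the paper itself does not reproduce a proof but defers entirely to \cite[Lemma 30]{Py}, whose argument is the same as yours (normality of the centralizer of a normal subgroup, $\Lambda\cap R=Z(R)=1$ for a closed hyperbolic surface group, and the identification $\ker\phi=\pi(\Lambda)$ via $R\Lambda$). Your closing remark correctly isolates where the surface-group hypothesis is used.
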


\begin{remark}
	Recall $R$ is a surface group, which has trivial center. So the subgroup of $G$ generated by $R$ and $\Lambda$ is isomorphic to $R \times\Lambda$.
\end{remark}


We fix a properly discontinuous and cocompact action $G\curvearrowright(E, d)$, where $(E, d)$ is a proper CAT($0$) space.  Note in our case, the $G$ here might not act faithfully on $E$. Observe the kernel of the action of $G$ on $E$ is a finite normal subgroup of $G$, say $K$.  In Lemma \ref{FiniteQuotient}, we show $Q/p(K)$ still has Property (LIP). So we may freely pass $G$, $Q$ to the quotients $G/K$, $Q/p(K)$ respectively. We abuse the notations by still denoting the quotients by $G$, $Q$ and $\Lambda$ respectively. Now we split into two cases:\\

\noindent\textbf{Case 1:} Suppose the group $R\times\Lambda$ does not fix any point in the visual boundary of $E$.\\

As \cite{Py} pointed out, by \cite{monod}, in this case there exists a closed $R \times\Lambda$-invariant convex subset $M \subset E$ which is minimal for these properties and canonical, and which is $G$-invariant. Moreover, the action of $G$ on $M$ is properly discontinuous and cocompact. Also, there exists an isometric splitting
$$M\cong M_1 \times M_2,$$ so that $R$ acts isometrically on $M_1$, $\Lambda$ acts isometrically on $M_2$ and the action of $R\times\Lambda$ on $M$ is the product of these two actions. For details, see p.465 of \cite{Py}.\\



\begin{proposition}\cite[Proposition 31]{Py}\label{31}
	The $G$-action on $M\cong M_1 \times M_2$ is a product action.
\end{proposition}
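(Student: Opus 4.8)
The plan is to prove that every $g\in G$ preserves each of the two families of slices of $M$, namely $\mathcal{F}_1=\{M_1\times\{y\}:y\in M_2\}$ and $\mathcal{F}_2=\{\{x\}\times M_2:x\in M_1\}$; from this it follows at once that $g$ acts on $M\cong M_1\times M_2$ as a pair $(\phi_g,\psi_g)\in\mathrm{Isom}(M_1)\times\mathrm{Isom}(M_2)$, which is the assertion that the $G$-action is a product action. Beyond the splitting $M\cong M_1\times M_2$ itself (in which $R$ acts trivially on $M_2$ and $\Lambda$ trivially on $M_1$, as part of "$R\times\Lambda$ acts as the product action") and the $G$-invariance of $M$ (so $G$ acts on $M$ by isometries), the one fact I need is that the $R$-action on $M_1$ and the $\Lambda$-action on $M_2$ are \emph{minimal}, i.e.\ admit no proper non-empty closed convex invariant subset. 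This is immediate from minimality of the $R\times\Lambda$-action on $M$: a proper non-empty $R$-invariant closed convex $C_1\subsetneq M_1$ would yield the proper non-empty $R\times\Lambda$-invariant closed convex subset $C_1\times M_2\subsetneq M$. Recall finally that $R\trianglelefteq G$ by hypothesis and $\Lambda\trianglelefteq G$ by Lemma~\ref{30}.

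The first step is an intrinsic description of the slices in terms of the group actions. I claim that, for $p=(x,y)\in M_1\times M_2$, the smallest $R$-invariant closed convex subset of $M$ containing $p$ is exactly $M_1\times\{y\}$. Indeed $M_1\times\{y\}$ is closed, is convex (a fibre of the projection $M\to M_2$), is $R$-invariant since $r\cdot(x',y)=(r\cdot x',y)$, and contains $p$; hence the intersection of all $R$-invariant closed convex subsets through $p$ — itself $R$-invariant, closed and convex — is contained in $M_1\times\{y\}$. Conversely, if $C$ is $R$-invariant, closed and convex with $p\in C$, then $C$ contains the orbit $R\cdot p=(R\cdot x)\times\{y\}$ and hence its closed convex hull $\overline{\mathrm{conv}}_{M_1}(R\cdot x)\times\{y\}$; since $\overline{\mathrm{conv}}_{M_1}(R\cdot x)$ is a non-empty closed convex $R$-invariant subset of $M_1$, minimality forces it to equal $M_1$, so $M_1\times\{y\}\subseteq C$. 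Symmetrically, using minimality of the $\Lambda$-action on $M_2$, the smallest $\Lambda$-invariant closed convex subset of $M$ containing $p$ is $\{x\}\times M_2$.

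For the second step, fix $g\in G$. Because $R\trianglelefteq G$ and $g$ is a bijective isometry of $M$, $g$ carries $R$-invariant closed convex subsets to $R$-invariant closed convex subsets (if $rC=C$ for all $r\in R$ then $(grg^{-1})(gC)=gC$, and $grg^{-1}$ ranges over $R$) and commutes with intersections; hence it carries the smallest $R$-invariant closed convex subset through $p$ to the one through $gp$. By the first step this says $g(M_1\times\{y\})=M_1\times\{y'\}$ for some $y'\in M_2$ depending only on $y$; thus $g$ permutes $\mathcal{F}_1$, and likewise, using $\Lambda\trianglelefteq G$, it permutes $\mathcal{F}_2$. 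Since $(x,y)$ is the unique point of $(M_1\times\{y\})\cap(\{x\}\times M_2)$, it follows that $g\cdot(x,y)=(\phi_g(x),\psi_g(y))$, where $\phi_g(x)$ is the $M_1$-coordinate of the slice $g(\{x\}\times M_2)$ and $\psi_g(y)$ the $M_2$-coordinate of $g(M_1\times\{y\})$. Finally, since $g$ preserves the product metric $d_M^2=d_{M_1}^2+d_{M_2}^2$ on $M$, holding one coordinate fixed and letting the other vary shows $\phi_g$ and $\psi_g$ are distance-preserving, and surjectivity of $g$ gives surjectivity of each; so $\phi_g\in\mathrm{Isom}(M_1)$, $\psi_g\in\mathrm{Isom}(M_2)$, and $g\mapsto(\phi_g,\psi_g)$ exhibits the product action.

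I expect the only real subtlety to be importing the structural input of \cite{monod} correctly — in particular the minimality of each factor action, which is exactly what makes the "smallest invariant closed convex subset" description recover the whole slice rather than something smaller; once that is in place the remaining argument is formal bookkeeping. It is worth noting that no interchange of the two factors can occur here: the argument directly yields $g(\mathcal{F}_1)=\mathcal{F}_1$, and in any case $\mathcal{F}_1$ and $\mathcal{F}_2$ are intrinsically distinguished because $R$ acts trivially on $M_2$ but not on $M_1$.
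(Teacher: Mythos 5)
Your argument is correct, and it is essentially the argument the paper defers to: the paper gives no proof of its own but cites \cite[Proposition 31]{Py}, whose proof likewise rests on characterizing the slices $M_1\times\{y\}$ (resp.\ $\{x\}\times M_2$) intrinsically via minimality of the factor actions and normality of $R$ and $\Lambda$, so that $G$ permutes each family of slices. Your derivation of minimality of the $R$-action on $M_1$ from minimality of the $R\times\Lambda$-action on $M$, and the final bookkeeping showing $g\mapsto(\phi_g,\psi_g)$, are both sound.
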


The previous proposition implies that we can now consider the $G$-action on each $M_i$ separately. It factors through a faithful action of $G/\Lambda$ on $M_1$ (resp. $G/R$ on $M_2$).
\\

\begin{proposition}(cf.\cite{Py}, Proposition 32):\label{32}
	The $G/\Lambda$-action on $M_1$ is properly discontinuous and cocompact. Similarly the $G/R$-action on $M_2$ is properly discontinuous and cocompact.
\end{proposition}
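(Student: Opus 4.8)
The plan is to establish cocompactness and proper discontinuity separately; the former is essentially formal, the latter carries all the content.

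\emph{Cocompactness.} By Proposition \ref{31} I may write $g\cdot(x,y)=(\sigma(g)x,\tau(g)y)$ for homomorphisms $\sigma\colon G\to\mathrm{Isom}(M_1)$ and $\tau\colon G\to\mathrm{Isom}(M_2)$ with $\ker\sigma=\Lambda$ and $\ker\tau=R$. Pick a compact $D\subseteq M$ with $G\cdot D=M$ and let $D_1$ be its image under $M\to M_1$, which is compact. For $x\in M_1$, choosing $y\in M_2$ and $g\in G$ with $g\cdot(x',y')=(x,y)$ for some $(x',y')\in D$ gives $x=\sigma(g)x'$ with $x'\in D_1$, so $G\cdot D_1=M_1$. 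As the $M_1$-action factors through $G/\Lambda$, that quotient acts cocompactly on $M_1$; the case $G/R\curvearrowright M_2$ is identical.

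\emph{Proper discontinuity.} This I would approach by contradiction, stressing that genuine input is needed: the projection of a group acting geometrically on a product via a product action --- an irreducible lattice in a product of two isometry groups, say --- can have non-discrete image. Suppose $G/\Lambda$ does not act properly discontinuously on $M_1$. Then for some compact $K\subseteq M_1$ there are infinitely many $g_i\in G$, in pairwise distinct cosets $g_i\Lambda$, with $\sigma(g_i)K\cap K\neq\emptyset$; replacing $K$ by a larger compact set $K''$ we may assume $\sigma(g_i)K\subseteq K''$ for all $i$. Fix $x_0\in K$ and $y_0\in M_2$. Applying proper discontinuity of $G\curvearrowright M$ to the compact sets $K''\times\overline{B}(y_0,t)$ shows that for every $t$ only finitely many of the distinct cosets $g_i\Lambda$ satisfy $d_{M_2}(y_0,\tau(g_i)y_0)\leq t$, so $d_{M_2}(y_0,\tau(g_i)y_0)\to\infty$. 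Now exploit cocompactness of $G\curvearrowright M$: writing $g_i\cdot(x_0,y_0)=h_i\cdot d_i$ with $d_i\in D$, the element $h_i^{-1}g_i$ moves the fixed point $(x_0,y_0)$ a bounded distance, hence ranges over a finite subset of $G$ by proper discontinuity of $G\curvearrowright M$, so along a subsequence $g_i=h_if$ for a fixed $f\in G$. Since $\Lambda$ is normal the $h_i$ still lie in pairwise distinct $\Lambda$-cosets, yet $\sigma(h_i)$ keeps the fixed point $\sigma(f)x_0$ inside the fixed compact set $K''$ while $\tau(h_i)$ sends the fixed point $\tau(f)y_0$ off to infinity in $M_2$.

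\emph{The main obstacle.} Ruling out this configuration is the crux, carried out as in the proof of \cite[Proposition 32]{Py} and using the algebra of the extension. Under the Case 1 hypothesis one checks: $M_1$ is the \emph{minimal} nonempty closed convex $R$-invariant subset (were $N\subsetneq M_1$ another such subset, $N\times M_2$ would contradict minimality of $M$ for $R\times\Lambda$); $R$ fixes no point of $\partial M_1$ (such a point would also be fixed by $\Lambda$, which acts trivially on $M_1$, so $R\times\Lambda$ would fix a point of $\partial M\subseteq\partial E$, against Case 1); and, $R$ being an infinite normal subgroup of $G$, the group $G/\Lambda$ normalizes the faithful image $\sigma(R)\cong R$ inside $\mathrm{Isom}(M_1)$, with $\sigma(R)$ itself acting properly discontinuously on $M_1$. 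These facts --- properness and minimality of the $R$-action on $M_1$, the absence of a boundary fixed point, and $G/\Lambda$ normalizing $\sigma(R)$ --- are what preclude an escaping sequence $\{h_i\}$ of the above kind, forcing $G/\Lambda$ to act properly discontinuously on $M_1$. The statement for $G/R\curvearrowright M_2$ follows by interchanging the roles of $R$ and $\Lambda$: now $\Lambda$ --- nontrivial since $\phi$ is not injective, and in fact infinite (a finite $\Lambda$ would make $M_2$ a single point, on which the infinite group $G/R$ could not act properly discontinuously) --- plays the part of the infinite normal subgroup, $M_2$ is the minimal $\Lambda$-invariant closed convex subset with no fixed point at infinity, and $G/R$ normalizes $\tau(\Lambda)$ in $\mathrm{Isom}(M_2)$.
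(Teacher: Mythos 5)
Your cocompactness argument is fine and is the standard projection argument. The trouble is in the proper-discontinuity half. You set up the contradiction correctly, reducing to an ``escaping sequence'' $h_i$ lying in pairwise distinct $\Lambda$-cosets with $\sigma(h_i)$ keeping a point of $M_1$ in a fixed compact set while $\tau(h_i)$ escapes in $M_2$; but you then defer the exclusion of this configuration to ``the proof of \cite[Proposition 32]{Py}'' together with a list of facts (minimality, absence of a boundary fixed point, normalization of $\sigma(R)$) without giving any mechanism by which those facts rule the configuration out. That exclusion is the entire content of the proposition, so as written the proof is incomplete. The route the paper actually takes is different and cleaner: by \cite[Theorem 5.67]{dructu}, proper discontinuity of $G/\Lambda$ (resp.\ $Q\cong G/R$) on $M_1$ (resp.\ $M_2$) is equivalent to discreteness of the image in $\mathrm{Isom}(M_1)$ (resp.\ $\mathrm{Isom}(M_2)$), and discreteness comes from the standard fact that an isometry sufficiently close to the identity which normalizes a discrete subgroup must centralize any fixed finitely generated subgroup of it; one then needs the relevant centralizer to be \emph{finite}.

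This points to the more serious error: your closing claim that the $G/R\curvearrowright M_2$ case ``follows by interchanging the roles of $R$ and $\Lambda$.'' The two factors are not symmetric in this generalization. For $M_1$ the relevant centralizer $C_{G/\Lambda}(\sigma(R))$ is trivial, because an element of $G$ centralizing $R$ modulo $\Lambda=\ker\sigma$ has commutators with $R$ lying in $R\cap\Lambda=Z(R)=1$, so it lies in $C_G(R)=\Lambda$; this is why the argument of \cite{Py} carries over verbatim on that factor. For $M_2$, an element of $Q$ centralizing $\tau(\Lambda)$ corresponds to an element of $C_G(\Lambda)$, and there is no a priori reason for the image of this in $Q$ to be finite --- if $\Lambda$ had infinite center, for instance, it would not be. This is precisely where Property (LIP) must enter: it provides a finitely generated $\Lambda_0<\Lambda$ with $C_Q(\Lambda_0)$ finite, so that for a suitable identity neighbourhood $U$ in $\mathrm{Isom}(M_2)$ every element of $Q\cap U$ (which normalizes the discrete group $\tau(\Lambda)$) centralizes $\Lambda_0$, forcing $Q\cap U$ to be finite and $Q$ to be discrete. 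Your proof never invokes Property (LIP), which is the one hypothesis this proposition was designed to exploit, so the $M_2$ half of the statement --- the only part that does not already follow from \cite{Py} --- is not actually proved.
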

\begin{proof}
	The proof from \cite{Py} that $G/\Lambda$ and $G/R$ act cocompactly and $G/\Lambda$ acts properly discontinuously works in this more general setting as written. Indeed, as the proof from \cite{Py} points out that by \cite[Theorem 5.67]{dructu}, the action of $G/\Lambda$ (resp. $Q\cong G/R$) on $M_1$ (resp. $M_2$) is properly discontinuous if and only if $G/\Lambda$ (resp. $Q$) is discrete in the group $\text{Isom}(M_1)$ (resp. $\text{Isom}(M_2)$).\\

	
	The proof of $G/\Lambda$ is discrete in $\text{Isom}(M_1)$ is the same as \cite{Py}. It remains to show $Q$ is discrete in $\text{Isom}(M_2)$. Since $M_2$ is an invariant subset for the action of $\Lambda$ on $M$, $\Lambda$ acts properly discontinuously on $M_2$. Hence $\Lambda < \text{Isom}(M_2)$ is discrete.  Recall $\Lambda\cong\ker\phi$ is infinite, so by Property (LIP), $\Lambda$ has a finitely generated subgroup $\Lambda_0$ so that $C_Q(\Lambda_0)$ is finite. Then there is a neighbourhood $U$ of the identity in $\text{Isom}(M_2)$ such that every element of $U$ which normalizes $\Lambda$ must centralize $\Lambda_0$.  So $Q\cap U$ is finite and $Q$ is discrete.
\end{proof}

Fix a point $(m_1, m_2)\in M_1\times M_2$. Define $$f:G\to M_1\times M_2,$$ $$g\mapsto (g\cdot m_1, g\cdot m_2).$$
It induces a commutative diagram:\\


\begin{center}
	\begin{tikzcd}
		G\arrow[r,"f"] \arrow[d,"\psi"]
		& M_1\times M_2 \arrow[d,"Id"] \\
		G/\Lambda\times G/R\arrow[r]
		& M_1\times M_2
	\end{tikzcd}
\end{center}


By analyzing the diagram with Proposition \ref{32}, it can be verified that 
$$\psi:G\to G/\Lambda\times G/R \qquad (1)$$
in the diagram is a quasi-isometry. Observe it is indeed injective, thus an injective quasi-isometry between two finitely generated groups, hence its image has finite index. By taking the quotient by the subgroup $R$ on the left and by its image on the right on (1), it gives 
$$\psi':Q\to \phi(Q) \times Q,$$
(recall $\Lambda\cong\ker\phi$ and $G/R\cong Q$). Note the image of $\psi'$ is the graph of $\phi$. If $\text{Im}\psi$ had finite index, the graph of $\phi$ would have finite index in $\phi(Q)\times Q$, but this could only happen if the image of $\phi$ is finite, contradiction.\\

\noindent\textbf{Case 2:} If the group $R\times\Lambda$ has at least one fixed point in the visual boundary of $E$.\\

\begin{proposition} \cite[Proposition 33]{Py}\label{33}
	Let $\Gamma\curvearrowright Z$ be a group acting properly discontinuously and cocompactly on a $\mathrm{CAT}(0)$ space. Let $N<\Gamma$ be a finitely generated subgroup. If $N$ fixes a point in the visual boundary of $Z$, then the centralizer of $N$ in $\Gamma$ is infinite.
\end{proposition}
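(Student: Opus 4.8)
The plan is to prove this by a pigeonhole argument along a geodesic ray aimed at the fixed boundary point, combining cocompactness, proper discontinuity, and the finite generation of $N$; no Busemann functions are needed.

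First I would set up the geometry. Fix a basepoint $x_0\in Z$ and let $\xi\in\partial Z$ be the point fixed by $N$. Since $Z$ is proper (which is the case in all our applications, where $Z$ is a closed convex subspace of the proper $\mathrm{CAT}(0)$ space $E$), there is a geodesic ray $c\colon[0,\infty)\to Z$ with $c(0)=x_0$ and $c(\infty)=\xi$. Fix a finite generating set $n_1,\dots,n_k$ of $N$. For each $i$ the map $t\mapsto n_i\cdot c(t)$ is a geodesic ray with endpoint $n_i\cdot\xi=\xi$, hence is asymptotic to $c$; the distance function $t\mapsto d(c(t),\,n_i\cdot c(t))$ is therefore convex and bounded on $[0,\infty)$, so it is non-increasing and $d(c(t),\,n_i\cdot c(t))\le d(x_0,\,n_i\cdot x_0)$ for all $t$. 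Put $R:=\max_i d(x_0,\,n_i\cdot x_0)$.

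Next I would use cocompactness to ``recenter'' the ray. Let $D>0$ be such that every $\Gamma$-orbit meets $\overline B(x_0,D)$, and for each $t\ge 0$ pick $\gamma_t\in\Gamma$ with $d(\gamma_t\cdot c(t),x_0)\le D$. Then $\gamma_t n_i\gamma_t^{-1}$ displaces the point $\gamma_t\cdot c(t)$ by exactly $d(c(t),\,n_i\cdot c(t))\le R$, so by the triangle inequality it displaces $x_0$ by at most $2D+R$. By proper discontinuity, $\{\,g\in\Gamma:\,d(x_0,g\cdot x_0)\le 2D+R\,\}$ is finite, so the $k$-tuples $\bigl(\gamma_t n_1\gamma_t^{-1},\dots,\gamma_t n_k\gamma_t^{-1}\bigr)$ realize only finitely many values as $t$ varies; choose an infinite set $T\subseteq[0,\infty)$ on which this tuple is constant and fix $s\in T$. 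For every $t\in T$ the element $\gamma_s^{-1}\gamma_t$ conjugates each $n_i$ to itself, hence centralizes $N$ — this is exactly where finite generation of $N$ enters, since controlling a finite generating set controls the whole subgroup.

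Finally I would check that these centralizing elements are genuinely numerous: it suffices that $\{\gamma_t:t\in T\}$ is infinite, and this holds because $d(x_0,\gamma_t^{-1}\cdot x_0)\ge d(x_0,c(t))-d(c(t),\gamma_t^{-1}\cdot x_0)=t-d(\gamma_t\cdot c(t),x_0)\ge t-D\to\infty$. Hence $\{\gamma_s^{-1}\gamma_t:t\in T\}$ is an infinite subset of $C_\Gamma(N)$. I do not expect a serious obstacle here; the only points that need care are the soft $\mathrm{CAT}(0)$ inputs (existence of the ray $c$ to $\xi$ from $x_0$, and boundedness of the distance between asymptotic rays; see \cite[Ch.~II.2 and II.8]{Bridson}) and the verification that the $\gamma_t$ do not collapse into a finite set, and the finite generation hypothesis on $N$ is used only in the pigeonhole step and cannot be removed.
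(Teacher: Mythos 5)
Your argument is correct and is essentially the proof given in Llosa Isenrich--Py's Proposition 33, which this paper simply cites: a ray to the fixed point $\xi$, convexity forcing the generators' displacement along the ray to be non-increasing, cocompact recentering by elements $\gamma_t$, and properness plus pigeonhole to produce infinitely many elements $\gamma_s^{-1}\gamma_t$ conjugating each generator to itself. The one point to tighten is that you should run the pigeonhole over $t\in\mathbb{N}$ so that $T$ is \emph{unbounded}, not merely infinite: a bounded infinite $T$ would keep the points $c(t)$ in a compact set and proper discontinuity would then allow $\{\gamma_t: t\in T\}$ to be finite, whereas your final estimate $d(x_0,\gamma_t^{-1}x_0)\ge t-D$ already presumes $t\to\infty$ along $T$.
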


If $R\times \Lambda$ fixes a point in the visual boundary of $E$, so does the finitely generated group $R\times \Lambda_0$ (recall $R,\Lambda_0$ are both finitely generated). We apply the previous proposition to $N = R\times \Lambda_0$ and obtain that the centralizer $C_G(N)$ of $N$ is infinite. However, $$C_G(N)=C_G(R\times \Lambda_0)=C_G(R)\cap C_G(\Lambda_0)=\Lambda\cap C_G(\Lambda_0)=C_\Lambda(\Lambda_0),$$ which is finite, since by the Property (LIP), $C_\Lambda(\Lambda_0)<C_Q(\Lambda_0)$ is finite. So we deduce a contradiction and this proves Theorem \ref{ge}.\\

The following lemma is applied earlier in this section and the proof is motivated by \cite[Lemma 1.A]{MR1105339}.
\begin{lemma}\label{FiniteQuotient}
	If $G$ is a group with Property (LIP) and $K$ is a finite normal subgroup of $G$, then $G/K$ also has  Property (LIP).
\end{lemma}
\begin{proof}
	Let $H/K<G/K$ be an infinite normal subgroup. 
	By assumption, since $H$ is an infinite normal subgroup of $G$, there is an infinitely finitely generated subgroup $H_0$, so that $C_G(H_0)$ is finite. We claim $C_{G/K}(H_0K/K)$ is finite. Assume $C_{G/K}(H_0K/K)=C/K$, where $C\subset G$. Let $C$ act on $K$ by conjugation, denote the kernel of the action by $C_0$, which is obviously of finite index in $C$. By assumption, for any $h\in H_0, x\in C_0$, $[h,x]:=hxh^{-1}x^{-1}\in K$. In particular, since the elements of $C_0$ commute with the elements of $K$, for any $h\in H_0, x,y\in C_0$, 
	$$[h,x]^{-1}=x^{-1}[h,x]^{-1}x=[h,x]^{-1},$$
	$$[h,xy]=[h,x]x[h,y]x^{-1}=[h,x][h,y].$$
	So given any $h\in H_0$, there is a homomorphism 
	$$\phi_h:C_0\to [h,C_0],$$
	$$x\mapsto [h,x],$$
	where $[h,C_0]:=\{[h,x]|x\in C_0\}$.
	Since $H_0$ is finitely generated, we may assume $H_0$ is generated by $\{h_1,...,h_n\}$. So there is a homomorphism 
	$$\phi: C_0\to \prod_{i=1}^n [h_i,C_0],$$
	$$x\mapsto (\phi_{h_1}(x),...,\phi_{h_n}(x)).$$
	Note $\ker\phi\subset C_G(H_0)$, hence $\ker\phi$ is finite. Also observe, each $[h_i,C_0]\subset K$, thus $[h_i,C_0]$ is finite, which implies $[C:\ker\phi]=[C:C_0][C_0:\ker\phi]<\infty$. Therefore $C$ is finite. In particular, $C_{G/K}(H_0K/K)=C/K$ is finite.
\end{proof}


\bibliography{11} 
\bibliographystyle{alpha} 

\end{document}